\documentclass{article}
\usepackage[utf8]{inputenc}
\usepackage[T2A]{fontenc}
\usepackage[english]{babel}

\usepackage{parskip}

\usepackage{amsmath}
\usepackage{amssymb}
\usepackage{amsthm}
\usepackage{amsfonts}
\usepackage{asymptote}
\usepackage{xcolor}
\usepackage{subcaption}
\usepackage{graphicx}
\usepackage{cancel}

\newtheorem{lemma}{Lemma}[]

\newtheorem{proposition}{Proposition}[]

\newtheorem{theorem}{Theorem}[]

\theoremstyle{definition}
\newtheorem{definition}{Definition}[]
\newtheorem{example}{Example}[]
\theoremstyle{plain}

\newcommand{\diam}{\operatorname{diam}}

\newcommand{\dis}{\operatorname{dis}}
\newcommand{\dist}{\operatorname{d}}

\renewcommand{\:}{\colon}

\renewcommand{\ss}{\subset}

\newcommand{\N}{\mathbb{N}}

\newcommand{\R}{\mathbb{R}}

\newcommand{\Z}{\mathbb{Z}}

\newcommand{\nets}{\operatorname{Net}}

\title{Contractibility of the space of $\varepsilon$-nets in $\R$}
\author{Ivan N. Mikhailov}
\date{}

\begin{document}
\maketitle

\begin{abstract}
In this note, we show that the space of all $\varepsilon$-nets in the real line~$\R$ with a natural metric, equipped with either Hausdorff or Gromov--Hausdorff distance, is contractible.
\end{abstract}



\section{Introduction}

In~\cite{GromovEng} M.\,Gromov introduced moduli spaces of the class of all metric spaces at finite Gromov--Hausdorff distances from a given metric space. It was mentioned that such moduli spaces are always complete and contractible (\cite{GromovEng}[section $3.11_{\frac{1}{2}_+}$]). In~\cite{BogatyTuzhilin} the authors suggested to work with such moduli spaces (they were called \emph{clouds}) in the sense of NBG set theory to avoid arising set-theoretic issues. While the completeness of each cloud was verified in~\cite{BogatyTuzhilin}, the contractibility of each cloud remains an open question for a number of reasons. The main issue here is that a natural homothety-mapping that takes a metric space $(X, d_X)$ into $(X, \lambda d_X)$ for some $\lambda > 0$ and generates a contraction of a cloud of all bounded metric spaces if $\lambda\to 0$, does not behave so well in case of unbounded metric spaces. Firstly, in~\cite{BogatyTuzhilin} it was shown that there exist metric spaces such that $d_{GH}(X, \lambda X) = \infty$ for some $\lambda > 0$. The simplest one is a geometric progression $X = \{3^n\:n\in\N\}$ with a natural metric, for which $d_{GH}(X, 2X) = \infty$. Secondly, even for clouds that are invariant under multiplication on all positive numbers a homothety-mapping may not be continuous. In~\cite{nglitghclitcotrl} it was shown that $d_{GH}(\Z^n, \lambda \Z^n)\ge \frac{1}{2}$ for all $\lambda > 1$, $n\in\N$.

In this note we continue the investigation of the geometry of the Gromov--Hausdorff class. In~\cite{metrictreesRUS} Hausdorff geodesics were constructed, which are geodesic lines in the Gromov--Hausdorff class joining an arbitrary $\varepsilon$-net in $\R$ with $\R$. We show that these natural geodesic lines allow one to construct a contraction of the space of all $\varepsilon$-nets in $\R$, equipped with the Hausdorff metric. We also establish ultrametric inequalities~\ref{thm: UltraNetsHausdorff}, \ref{thm: UltraNetsGromovHausdorff}, which allow us to think of this space as a contractible cone with the vertex in the space $\R$, with the <<Euclidean>> angle at the vertex not exceeding~$\frac{\pi}{2}$, which resembles the situation in the cloud of bounded metric spaces (see Theorem~\ref{thm: boundedcloud}).


\section{Preliminaries}

\emph{A metric space} is an arbitrary pair $(X,\,\dist_X)$, where $X$ is an arbitrary set, $\dist_X\: X\times X\to [0,\,\infty)$ is some metric on it, that is, a nonnegative symmetric, positively definite function that satisfies the triangle inequality.

For convenience, if it is clear in which metric space we are working, we denote the distance between points $x$ and $y$ by $|xy|$. Suppose $X$ is a metric space. By $U_r(a) =\{x\in X\colon |ax|<r\}$, $B_r(a) = \{x\in X\colon |ax|\le r\}$ we denote open and closed balls centered at point~$a$ of radius~$r$ in~$X$. For an arbitrary subset $A\subset X$ of a metric space~$X$, let $U_r(A) = \cup_{a\in A} U_r(a)$ be an open $r$-neighborhood of~$A$. For non-empty subsets $A \ss X$, $B \ss X$ we put $\dist(A,\,B)=\inf\bigl\{|ab|:\,a\in A,\,b\in B\bigl\}$.

\subsection{Hausdorff and Gromov--Hausdorff distances}

\begin{definition}
\label{def: Hausdorff}
Let $A$ and $B$ be non-empty subsets of a metric space.
\emph{The Hausdorff distance} between $A$ and~$B$ is the quantity $$d_H(A,\,B) = \inf\bigl\{r > 0\colon A\subset U_r(B),\,B\subset U_r(A)\bigr\} = \max\left(\sup_{a\in A}\dist(a,\,B),\,\sup_{b\in B}\dist(b,\,A)\right)\!.$$
\end{definition}
\begin{definition} Let $X$ and $Y$ be metric spaces. The triple $(X', Y', Z)$, consisting of a metric space $Z$ and its two subsets $X'$ and $Y'$, isometric to $X$ and~$Y$ respectively, is called \emph{a realization of the pair} $(X, Y)$.
\end{definition}

\begin{definition} \emph{The Gromov-Hausdorff distance} $d_{GH} (X, Y)$ between $X$ and~$Y$ is the exact lower bound of the numbers $r\ge 0$ for which there exists a realization $(X', Y', Z)$ of the pair $(X, Y)$ such that $d_H(X',\,Y') \le r$. 
\end{definition}

Now let $X,\,Y$ be non-empty sets.  

\begin{definition} Each $\sigma\subset X\times Y$ is called a \textit{relation} between $X$ and~$Y$.
\end{definition}

By $\mathcal{P}_0(X,\,Y)$ we denote the set of all non-empty relations between $X$ and~$Y$.

We put $$\pi_X\colon X\times Y\rightarrow X,\;\pi_X(x,\,y) = x,$$ $$\pi_Y\colon X\times Y\rightarrow Y,\;\pi_Y(x,\,y) = y.$$ 

\begin{definition} A relation $R\subset X\times Y$ is called a \textit{correspondence}, if restrictions $\pi_X|_R$ and $\pi_Y|_R$ are surjective.
\end{definition}

Let $\mathcal{R}(X,\,Y)$ be the set of all correspondences between $X$ and~$Y$.

\begin{definition} Let $X,\,Y$ be metric spaces, $\sigma \in \mathcal{P}_0(X,\,Y)$. The \textit{distortion} of $\sigma$ is the quantity $$\dis \sigma = \sup\Bigl\{\bigl||xx'|-|yy'|\bigr|\colon(x,\,y),\,(x',\,y')\in\sigma\Bigr\}.$$
\end{definition}

\begin{proposition}[\cite{BBI}]  \label{proposition: distGHformula}
For arbitrary metric spaces $X$ and~$Y$, the following equality holds $$2d_{GH}(X,\,Y) = \inf\bigl\{\dis\,R\colon R\in\mathcal{R}(X,\,Y)\bigr\}.$$
\end{proposition}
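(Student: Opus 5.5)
The plan is to prove the two inequalities separately. From a realization with small Hausdorff distance we extract a correspondence with small distortion. Conversely, from a correspondence with small distortion we build a metric on the disjoint union $X\sqcup Y$ in which the two copies are Hausdorff-close. Throughout, $|\cdot\,\cdot|$ denotes the distance in whichever space is relevant.

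\emph{Step 1: $\inf\dis R\le 2d_{GH}(X,Y)$.} Take any $r>d_{GH}(X,Y)$ and a realization $(X',Y',Z)$ with $d_H(X',Y')<r$; we identify $X$ with $X'$ and $Y$ with $Y'$. Put
$$R=\bigl\{(x,y)\in X\times Y\colon |xy|_Z<r\bigr\}.$$
Since $X\subset U_r(Y)$ and $Y\subset U_r(X)$, both projections of $R$ are surjective, so $R\in\mathcal{R}(X,Y)$. For $(x,y),(x',y')\in R$, the triangle inequality in $Z$ gives
$$\bigl||xx'|-|yy'|\bigr|\le |xy|+|x'y'|<2r.$$
Hence $\dis R\le 2r$, and letting $r\downarrow d_{GH}(X,Y)$ gives the inequality.

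\emph{Step 2: $2d_{GH}(X,Y)\le\inf\dis R$.} Fix $R\in\mathcal{R}(X,Y)$ with $\dis R<\infty$ (otherwise there is nothing to prove), and choose any $r>0$ with $2r\ge\dis R$. Taking $r>0$ rather than $r=\frac12\dis R$ guarantees positive definiteness. On $Z=X\sqcup Y$, keep the original metrics on $X$ and on $Y$, and for $x\in X$, $y\in Y$ set
$$|xy|=\inf_{(x',y')\in R}\bigl(|xx'|+r+|y'y|\bigr).$$
This quantity is symmetric and at least $r>0$.

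\emph{Step 3: the triangle inequality on $Z$.} Triangles with all vertices in one space, or with two vertices in $X$ and the third vertex in $Y$ on the ``long'' side, follow directly by concatenating the defining chains. The only nontrivial case is comparing $|xx_1|$ (both points in $X$) with $|xy|+|yx_1|$. Take near-optimal pairs $(x',y'),(x_1',y_1')\in R$ for the two mixed distances. Then
$$|xx_1|\le |xx'|+|x'x_1'|+|x_1'x_1|\le |xx'|+|y'y_1'|+\dis R+|x_1'x_1|.$$
Next, $|y'y_1'|\le|y'y|+|yy_1'|$ and $\dis R\le 2r$, so the right-hand side is at most $|xy|+|yx_1|$ up to an arbitrarily small error. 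This is the one place where the bound $2r\ge\dis R$ is used, and it is the step I expect to need the most care. The symmetric case with two points in $Y$ is identical.

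\emph{Step 4: conclusion.} Every $x\in X$ has some $y$ with $(x,y)\in R$, and then $|xy|\le r$; symmetrically every $y\in Y$ is within $r$ of $X$. Hence $d_H(X,Y)\le r$ in $Z$, and since $(X,Y,Z)$ is a realization of the pair, $d_{GH}(X,Y)\le r$. Letting $r\downarrow\frac12\dis R$ and then taking the infimum over all correspondences $R$ gives $2d_{GH}(X,Y)\le\inf\dis R$. Together with Step 1 this proves the equality.
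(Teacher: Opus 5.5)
Your proof is correct and is essentially the standard argument from \cite{BBI}, which the paper cites without reproducing: the correspondence $\{(x,y)\colon |xy|<r\}$ gives one inequality, and the metric $|xy|=\inf_{(x',y')\in R}\bigl(|xx'|+r+|y'y|\bigr)$ on $X\sqcup Y$ gives the other. Taking $r>0$ with $2r\ge\dis R$, rather than $r=\frac12\dis R$ with a semi-metric as in \cite{BBI}, is a sensible adjustment: the paper's definition of a realization requires a genuine metric space $Z$, and this matters exactly when $\dis R=0$.
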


For a metric space~$(X, d_X)$, we denote $\nets_H(X)$ ($\nets_{GH}(X)$) the space of all $\varepsilon$-nets in $X$ equipped with the Hausdorff (Gromov--Hausdorff) distance.

\subsection{Clouds}

Let $\mathcal{VGH}$ denote the class of all nonempty metric spaces endowed with the Gromov--Hausdorff distance.  

\begin{theorem}[\cite{BBI}]
The Gromov--Hausdorff distance is a generalized pseudometric on $\mathcal{VGH}$ that vanishes on every pair of isometric spaces. Namely, the Gromov--Hausdorff distance is symmetric, satisfies the triangle inequality, but in general may be infinite or zero.
\end{theorem}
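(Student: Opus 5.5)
The plan is to verify the three claims in turn: symmetry, the triangle inequality, and vanishing on isometric pairs. The tool throughout is Proposition~\ref{proposition: distGHformula}, which expresses $2d_{GH}(X,Y)$ as the infimum of $\dis R$ over $R\in\mathcal{R}(X,Y)$. Working with correspondences avoids building a common ambient space $Z$ by hand, which is awkward here: $X$ and $Y$ are arbitrary, and the distance may be infinite.

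\emph{Symmetry.} Send $R$ to its transpose $R^{-1}=\{(y,x)\colon (x,y)\in R\}$. Transposition is a bijection $\mathcal{R}(X,Y)\to\mathcal{R}(Y,X)$. It preserves distortion, because the expression $\bigl||xx'|-|yy'|\bigr|$ is symmetric in the roles of $X$ and $Y$. Hence the two infima coincide, and so do the two distances.

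\emph{Vanishing on isometric pairs.} Let $f\colon X\to Y$ be an isometry. Its graph $\{(x,f(x))\}$ is a correspondence, since $f$ is a bijection. Its distortion is $0$, since $|f(x)f(x')|=|xx'|$. Therefore $d_{GH}(X,Y)=0$. This same example shows that $d_{GH}$ may vanish on distinct elements of $\mathcal{VGH}$, and the unbounded examples quoted in the introduction show it may be infinite. This is why the statement only claims a generalized pseudometric.

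\emph{Triangle inequality.} Let $X,Y,W$ be metric spaces and take correspondences $R_1\in\mathcal{R}(X,Y)$ and $R_2\in\mathcal{R}(Y,W)$. Form the composition
\[
R_2\circ R_1=\{(x,w)\colon \exists\,y\ (x,y)\in R_1,\ (y,w)\in R_2\}.
\]

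\begin{itemize}
\item It is a correspondence. Given $x$, pick $y$ with $(x,y)\in R_1$ (possible since $\pi_X|_{R_1}$ is onto), then pick $w$ with $(y,w)\in R_2$ (since $\pi_Y|_{R_2}$ is onto); so $\pi_X$ is onto. Surjectivity onto $W$ follows symmetrically.
\item Its distortion is at most $\dis R_1+\dis R_2$. For $(x,w),(x',w')\in R_2\circ R_1$ with intermediate points $y,y'$,
\[
\bigl||xx'|-|ww'|\bigr|\le\bigl||xx'|-|yy'|\bigr|+\bigl||yy'|-|ww'|\bigr|\le \dis R_1+\dis R_2 .
\]
\end{itemize}

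Taking the infimum over $R_1$ and $R_2$ and dividing by $2$ gives $d_{GH}(X,W)\le d_{GH}(X,Y)+d_{GH}(Y,W)$. If either term on the right is infinite the inequality is trivial, so infinite values cause no trouble.

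The only step needing care is the surjectivity of the composed relation, and it reduces directly to the surjectivity of the projections of $R_1$ and $R_2$. The set-theoretic issues of working in a class are not relevant here, because every argument involves only a fixed pair or triple of spaces.
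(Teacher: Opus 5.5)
Your proof is correct. The paper gives no proof of this statement; it is quoted from \cite{BBI}. So the comparison is with the usual argument rather than with anything in the paper.

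The usual argument proves the triangle inequality directly from the definition by gluing. Given realizations $(X',Y',Z_1)$ of $(X,Y)$ and $(Y'',W',Z_2)$ of $(Y,W)$, one glues $Z_1$ and $Z_2$ along the common copy of $Y$, setting $d(z_1,z_2)=\inf_{y\in Y}\bigl(d_1(z_1,y)+d_2(y,z_2)\bigr)$ for $z_1\in Z_1$, $z_2\in Z_2$. One then checks that this is a (semi)metric in which $d_H(X',W')\le d_H(X',Y')+d_H(Y'',W')$.

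Your route composes correspondences instead. This makes surjectivity and the distortion bound one-line checks and disposes of infinite values for free. The cost is that everything rests on Proposition~\ref{proposition: distGHformula}, whose proof contains essentially the same gluing idea: building a metric on $X\sqcup Y$ out of a correspondence. There is no circularity, because that proposition is proved without using the triangle inequality for $d_{GH}$.

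One minor remark on your illustration that $d_{GH}$ is only a pseudometric. Two distinct isometric copies do technically suffice in $\mathcal{VGH}$. The more telling example is a non-isometric pair at distance zero, such as $\mathbb{Q}$ and $\R$, since $d_H(\mathbb{Q},\R)=0$ inside $\R$. This is what makes the quotient $\mathcal{GH}_0$ genuinely coarser than passing to isometry classes.
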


The class $\mathcal{GH}_0$ is obtained from $\mathcal{VGH}$ by factoring out zero distances, i.e., by the equivalence relation: $X\sim_0 Y$ if and only if $d_{GH}(X,\,Y) = 0$.

\begin{definition}
Consider the equivalence relation $\sim_1$ on $\mathcal{GH}_0$: $X\sim_1 Y$ if and only if $d_{GH}(X,\,Y) < \infty$. The corresponding equivalence classes are called \emph{clouds}.
\end{definition}

For an arbitrary metric space $X$, the cloud defined by it will be denoted by $[X]$. Let $\Delta_1$ denote the metric space consisting of a single point. Thus, $[\Delta_1]$ is the cloud consisting of the classes of all bounded spaces at zero distance from each other.

Suppose that for some pair of metric spaces $A$ and $A'$ we have $d_{GH}(A, A') = 0$. Then for any metric space $B$, the equality $d_{GH}(A, B) = d_{GH}(A', B)$ holds. From this simple observation it follows that any result concerning the Gromov--Hausdorff distance $d_{GH}(A, B)$ remains valid when $A$ is replaced by $A'$ with $d_{GH}(A, A') = 0$. Thus, instead of interpreting the notation $A\in [X]$ directly by definition, assuming that $A$ is an equivalence class of spaces at zero Gromov--Hausdorff distance from $A$, we will, without loss of generality, consider $A$ as a concrete representative of this equivalence class. For example, the notation $X\in[\Delta_1]$ throughout the paper can be read as ``$X$ is a bounded metric space''.

The following theorem on the structure of the cloud $[\Delta_1]$ is integral for this work.

\begin{theorem}[\cite{BBI}] \label{thm: boundedcloud}
Let $X$ and $Y$ be arbitrary bounded metric spaces. Then
\begin{itemize}
\item The following inequalities hold:
$$\frac{1}{2}\bigl|\diam X - \diam Y\bigr|\le d_{GH}(X, Y)\le \max\bigl\{d_{GH}(X, \Delta_1), d_{GH}(Y, \Delta_1)\bigr\} = \frac{1}{2}\max\bigl\{\diam X, \diam Y\bigr\}.$$
\item The map $\Phi\:[\Delta_1]\times \R_{\ge 0}\to [\Delta_1]$, $\Phi(X, \lambda) = \lambda X$ is continuous and yields a contraction of the cloud $[\Delta_1]$ as $\lambda \to 0$.
\item The curve $\lambda X$, $\lambda\in[0, +\infty)$ is a geodesic in the cloud $[\Delta_1]$.
\end{itemize}   
\end{theorem}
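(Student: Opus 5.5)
The plan is to prove the three items of Theorem~\ref{thm: boundedcloud} in order, working throughout with correspondences and Proposition~\ref{proposition: distGHformula}. Here $\lambda X$ denotes $(X,\lambda\dist_X)$.

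\textbf{Bounds and distance to the point.} First I will compute $d_{GH}(X,\Delta_1)$. The only correspondence between $X$ and $\Delta_1$ is $X\times\Delta_1$, and its distortion is $\diam X$. Hence $d_{GH}(X,\Delta_1)=\frac12\diam X$.

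For the lower bound, take any correspondence $R$ and any $x,x'\in X$. Choose $y,y'$ with $(x,y),(x',y')\in R$. Then $|xx'|\le |yy'|+\dis R\le \diam Y+\dis R$. Taking the supremum over $x,x'$ and symmetrizing gives $|\diam X-\diam Y|\le \dis R$. Taking the infimum over $R$ yields the left inequality.

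For the upper bound, use the full correspondence $X\times Y$. For any two pairs, $\bigl||xx'|-|yy'|\bigr|\le\max\{|xx'|,|yy'|\}\le\max\{\diam X,\diam Y\}$, since both distances are nonnegative. So $2d_{GH}(X,Y)\le\max\{\diam X,\diam Y\}$, which matches the claimed maximum of the distances to $\Delta_1$.

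\textbf{Continuity and contraction.} The key estimate is
\[
d_{GH}(\lambda X,\mu Y)\le |\lambda-\mu|\,\tfrac12\diam X+\mu\, d_{GH}(X,Y).
\]
To obtain it, first compare $\lambda X$ with $\mu X$ using the identity correspondence. Its distortion is $\sup|\lambda-\mu|\,|xx'|=|\lambda-\mu|\diam X$. Next, any correspondence $R$ between $X$ and $Y$ has distortion $\mu\dis R$ when viewed between $\mu X$ and $\mu Y$, so $d_{GH}(\mu X,\mu Y)=\mu\, d_{GH}(X,Y)$. The triangle inequality then combines the two comparisons.

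This estimate shows that $\Phi$ is jointly continuous at every $(X,\lambda)$; the quantity $\diam X$ is finite because $X$ is bounded. At $\lambda=0$ we set $0X=\Delta_1$, which is consistent because $d_{GH}(\lambda X,\Delta_1)=\frac{\lambda}{2}\diam X\to0$. Moreover $\Phi(\cdot,1)=\mathrm{id}$ and $\Phi(\cdot,0)$ is constant. Hence $\Phi$ restricted to $[0,1]$ is a contraction.

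\textbf{Geodesic.} For $0\le\lambda\le\mu$ the identity correspondence gives
\[
d_{GH}(\lambda X,\mu X)\le\tfrac12(\mu-\lambda)\diam X .
\]
The first item gives the reverse inequality
\[
d_{GH}(\lambda X,\mu X)\ge\tfrac12|\diam\mu X-\diam\lambda X|=\tfrac12(\mu-\lambda)\diam X .
\]
So equality holds. The curve is therefore a geodesic, parametrized with constant speed $\frac12\diam X$; this is degenerate only when $X$ is a point.

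\textbf{Main obstacle.} The only subtle point is the interpretation of continuity of $\Phi$ on a class rather than a set, with $\R_{\ge0}$ carrying its usual topology. I will handle it by checking sequential continuity via the displayed estimate, which is legitimate since all distances involved are finite on $[\Delta_1]$. I will also note that everything above is invariant under replacing spaces by representatives at zero distance.
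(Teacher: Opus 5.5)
Your proof is correct, and there is no proof in the paper to compare it against: the paper states this theorem without proof and attributes it to Burago--Burago--Ivanov. Your argument is the standard correspondence-based one from that source. It uses the unique correspondence $X\times\Delta_1$, the full correspondence $X\times Y$, the identity correspondence between $\lambda X$ and $\mu X$, and the rescaling identity $d_{GH}(\mu X,\mu Y)=\mu\, d_{GH}(X,Y)$. The geodesic item then follows cleanly from the lower bound in the first item.

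The ``main obstacle'' you flag is not really one. Your estimate $d_{GH}(\lambda X,\mu Y)\le\frac12|\lambda-\mu|\diam X+\mu\, d_{GH}(X,Y)$ already gives an explicit $\varepsilon$--$\delta$ modulus of continuity at each $(X,\lambda)$. So you need neither sequences nor any set-versus-class discussion for continuity.
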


\subsection{Auxiliary results}

Finally, we will need two more theorems. The first one allows the construction of Hausdorff geodesics (see \cite{metrictreesRUS} for details):

\begin{theorem}[\cite{metrictreesRUS}, Theorem 9.2]\label{thm:HausdEqGromovHausdR}
Let $X$ be a subset of the real line $\R$, then $d_{GH}(X,\R)=d_H(X,\R)$.
\end{theorem}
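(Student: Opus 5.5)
The inequality $d_{GH}(X,\R)\le d_H(X,\R)$ is immediate: the triple $(X,\R,\R)$ is a realization of the pair $(X,\R)$. So the work is in the reverse inequality $2d_{GH}(X,\R)\ge 2d_H(X,\R)$. By Proposition~\ref{proposition: distGHformula}, it suffices to show that every correspondence $R\in\mathcal{R}(\R,X)$ satisfies $\dis R\ge 2d_H(X,\R)$. Since $X\subset\R$, we have $d_H(X,\R)=\sup_{t\in\R}\dist(t,X)$. This quantity is governed by the largest gaps of $X$, where a \emph{gap} is a bounded open interval $(a,b)$ with $a,b\in\overline{X}$ and no points of $X$ in between. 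It is also governed by whether $X$ is bounded above or below. Replacing $X$ by its closure changes neither distance, so I may assume $X$ is closed.

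\textbf{Finite case, $X$ unbounded in both directions.} Here $d_H(X,\R)=\frac12\sup(b-a)$, the supremum taken over all gaps $(a,b)$. Fix a gap $(a,b)$ and a correspondence $R$. Put $T_-=\{t\in\R:\exists x\le a,\ (t,x)\in R\}$ and $T_+=\{t\in\R:\exists x\ge b,\ (t,x)\in R\}$.
\begin{itemize}
\item Since $\pi_\R|_R$ is surjective and $X\cap(a,b)=\emptyset$, every $t\in\R$ lies in $T_-\cup T_+$.
\item Since $\pi_X|_R$ is surjective, $a$ has a partner in $\R$ and so does $b$; hence both $T_-$ and $T_+$ are nonempty.
\end{itemize}
If some $t$ lies in $T_-\cap T_+$, then taking $t'=t$ gives $\dis R\ge b-a$ at once. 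Otherwise $T_-,T_+$ partition $\R$ into two nonempty sets. By connectedness of $\R$ (take a boundary point of $T_-$, for example the supremum of $T_-$ on a suitable bounded interval), for every $\eta>0$ there are $t\in T_-$ and $t'\in T_+$ with $|tt'|<\eta$. Their partners $x\le a$ and $x'\ge b$ satisfy $|xx'|\ge b-a$, so $\dis R\ge b-a-\eta$. Letting $\eta\to0$ and taking the supremum over gaps gives $\dis R\ge 2d_H(X,\R)$.

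\textbf{Infinite case.} If $X$ is bounded, then $d_H=\infty$, and $d_{GH}(X,\R)=\infty$ because the diameters differ: any correspondence has infinite distortion. If $X$ is unbounded on exactly one side, say $X\subset[c,\infty)$, I argue by contradiction. Suppose $R$ has $\dis R=\varepsilon<\infty$. Fix a pair $(0,y_0)\in R$, and for large $s$ choose pairs $(s,y_1),(-s,y_2)\in R$. Then
$$|y_iy_0|\ge s-\varepsilon \quad (i=1,2), \qquad |y_1y_2|\ge 2s-\varepsilon.$$
Since $y_1,y_2\ge c$ and $y_0$ is fixed, for $s$ large both $y_1$ and $y_2$ must lie to the right of $y_0$. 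Hence
$$|y_1y_2|\le \max(|y_1y_0|,|y_2y_0|)\le s+\varepsilon,$$
which contradicts $|y_1y_2|\ge 2s-\varepsilon$ for large $s$. So $d_{GH}(X,\R)=\infty=d_H(X,\R)$.

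\textbf{Main obstacle.} The crux is the finite case: it requires the observation that a correspondence cannot ``jump'' across a gap without paying its full length in distortion. The partition-plus-connectedness argument above handles this. The remaining care is bookkeeping: closedness of $X$, sets $T_\pm$ that overlap, and choosing $\eta$ and the gap so that the bound comes within any $\delta$ of the supremum.
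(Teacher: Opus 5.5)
Your proof is correct. The paper does not prove this statement: it quotes Theorem~9.2 of \cite{metrictreesRUS} and uses it as a black box in Theorem~\ref{thm: UltraNetsGromovHausdorff}. So there is no in-paper argument to compare with, and your proposal works as a self-contained replacement.

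The key step is sound. Any correspondence $R\in\mathcal{R}(\R,X)$ forces the two nonempty sets $T_-$, $T_+$ to cover $\R$, and connectedness puts them at distance zero, so $\dis R\ge b-a$ for every bounded gap $(a,b)$. This uses nothing beyond Proposition~\ref{proposition: distGHformula}, and it holds for every bounded gap of an arbitrary closed $X\subset\R$, not only in your ``finite case.''

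A few bookkeeping remarks:
\begin{itemize}
\item The label ``finite case'' is inaccurate. A set unbounded in both directions can still have $d_H(X,\R)=\infty$, for example $X=\{\pm 3^n\colon n\in\N\}$. Your argument covers this anyway, since it gives $\dis R\ge\sup(b-a)=\infty$.
\item The split into $T_-\cap T_+\neq\emptyset$ versus disjoint is unnecessary. If the sets meet, just take $t=t'$ in the connectedness step.
\item In the one-sided case, make the threshold explicit. For $s>\max\{y_0-c+\varepsilon,\,2\varepsilon\}$, both $y_1,y_2$ lie to the right of $y_0$, and then $2s-\varepsilon\le|y_1y_2|\le s+\varepsilon$ is impossible.
\item In the one-sided case, also state why $d_H(X,\R)=\infty$: $\dist(t,X)\ge c-t\to\infty$ as $t\to-\infty$.
\end{itemize}
With these small additions the argument is complete.
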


And the second one, which shows that a natural homothety-mapping is not a contraction of the cloud $[\R]$:

\begin{theorem}[\cite{nglitghclitcotrl}]\label{thm: homothety-fails}
Consider the integer lattice $\Z^n$, equipped with the Euclidean metric. Then, for all $\lambda > 1$, $n\in\N$, the inequality holds $d_{GH}(\Z^n, \lambda \Z^n)\ge \frac{1}{2}$.   
\end{theorem}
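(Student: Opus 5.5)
We argue by contradiction, using the correspondence formula of Proposition~\ref{proposition: distGHformula}. Fix $n\in\N$ and $\lambda>1$, and suppose that $d_{GH}(\Z^n,\lambda\Z^n)<\frac12$. Then there is a correspondence $R\in\mathcal{R}(\Z^n,\lambda\Z^n)$ with $\delta:=\dis R<1$. The plan has two steps: first show that such a small distortion forces $R$ to be the graph of a bijection; then derive a contradiction from the different densities of the two lattices.

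\emph{Step 1: $R$ is a bijection.} Let $(x,y),(x',y)\in R$. Then $|xx'|\le |yy|+\delta=\delta<1$. Since distinct points of $\Z^n$ are at distance at least $1$, this gives $x=x'$. Symmetrically, let $(x,y),(x,y')\in R$. Then $|yy'|\le\delta<1<\lambda$. Distinct points of $\lambda\Z^n$ are at distance at least $\lambda$, so $y=y'$. Because $\pi_X|_R$ and $\pi_Y|_R$ are surjective, $R$ is therefore the graph of a bijection $f\colon\Z^n\to\lambda\Z^n$ satisfying
$$\bigl||f(x)f(x')|-|xx'|\bigr|\le\delta \quad\text{for all } x,x'\in\Z^n.$$

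\emph{Step 2: counting.} Put $y_0=f(0)$. For every $r>0$, the map $f$ sends $\Z^n\cap B_r(0)$ injectively into $\lambda\Z^n\cap B_{r+\delta}(y_0)$. We now compare the sizes of these two finite sets. By the standard lattice-point count, $\#(\Z^n\cap B_r(0))=\omega_n r^n+O(r^{n-1})$, where $\omega_n$ is the volume of the unit ball. The same count applies to every ball, whatever its center: each lattice point in the ball owns a unit cube, and all these cubes lie in a ball of radius larger by $\sqrt n$, while the cubes of points in the smaller ball cover the ball of radius smaller by $\sqrt n$.

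Rescaling by $\lambda$, we get
$$\#\bigl(\lambda\Z^n\cap B_{r+\delta}(y_0)\bigr)\le \omega_n\Bigl(\frac{r+\delta+\lambda\sqrt n}{\lambda}\Bigr)^n.$$
Since $\lambda>1$, this upper bound is at most $\omega_n\lambda^{-n}r^n+O(r^{n-1})$. For large $r$ it is strictly smaller than $\#(\Z^n\cap B_r(0))\ge\omega_n(r-\sqrt n)^n$. This contradicts the injectivity of $f$, so $d_{GH}(\Z^n,\lambda\Z^n)\ge\frac12$.

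The only delicate point is Step~1, namely that the strict inequality $\delta<1$ suffices to make $R$ single-valued in both directions. It works because the separation of $\Z^n$ is exactly $1$, and the separation $\lambda$ of $\lambda\Z^n$ exceeds $1$. The counting in Step~2 is routine once uniform-in-center volume estimates for lattice points in balls are available.
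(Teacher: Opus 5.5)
Your proof is correct and complete. The paper gives no proof of this statement to compare against: it is quoted from \cite{nglitghclitcotrl} and used only as a black box in the Example on $\lambda_n\Z$. Your argument is therefore a self-contained justification. A correspondence with $\dis R=\delta<1$ yields an injective map $f\colon\Z^n\to\lambda\Z^n$ that distorts distances by at most $\delta$. Such a map cannot exist, because $\Z^n\cap B_r(0)$ has roughly $\omega_n r^n$ points while $\lambda\Z^n\cap B_{r+\delta}(y_0)$ has roughly $\lambda^{-n}\omega_n r^n$. Your lattice-point bounds supply exactly what is needed: $\#(\Z^n\cap B_R(c))\le\omega_n(R+\sqrt n)^n$ for every center $c$, and $\#(\Z^n\cap B_r(0))\ge\omega_n(r-\sqrt n)^n$.

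Your closing commentary places the delicate point in the wrong step. Step~2 only needs some selection $f(x)\in R(x)$ to be injective, and that follows from $\delta<1$ and the $1$-separation of $\Z^n$ alone. The half of Step~1 showing that $R$ is single-valued on $\Z^n$, which uses the $\lambda$-separation of $\lambda\Z^n$, is not needed. The hypothesis $\lambda>1$ enters only through the density comparison $\lambda^{-n}<1$ in Step~2, and that is what makes the argument asymmetric in the two lattices. Consequently the same proof shows $\dis R\ge s$ for every correspondence between an $s$-separated set and a set of strictly smaller asymptotic density, in any norm.

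The phrase ``the cubes of points in the smaller ball cover the ball of radius smaller by $\sqrt n$'' should read ``the cubes of the lattice points in $B_r(0)$ cover $B_{r-\sqrt n}(0)$'', which is the inequality you actually use.
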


\section{Main results}

\subsection{Hausdorff distance}

\begin{proposition}\label{thm: UltraNetsHausdorff}
For arbitrary $A, B\ss X$, we have $d_H(A, B)\le \max\big\{d_H(A, X), d_H(B, X)\big\}$.
\end{proposition}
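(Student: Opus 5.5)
The plan is to unwind Definition~\ref{def: Hausdorff} and observe that, because $A$ and $B$ are subsets of $X$, half of each Hausdorff distance to $X$ vanishes. For $A\ss X$ we have $\sup_{a\in A}\dist(a, X)=0$, since every $a\in A$ is itself a point of $X$. Hence
$$d_H(A, X)=\sup_{x\in X}\dist(x, A),$$
and likewise $d_H(B, X)=\sup_{x\in X}\dist(x, B)$. Put $r=\max\bigl\{d_H(A, X), d_H(B, X)\bigr\}$. If $r=\infty$ there is nothing to prove, so assume $r<\infty$.

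For the main step, take an arbitrary $a\in A$. Since $A\ss X$, the point $a$ is also a point of $X$, and so
$$\dist(a, B)\le \sup_{x\in X}\dist(x, B)=d_H(B, X)\le r.$$
Taking the supremum over $a\in A$ gives $\sup_{a\in A}\dist(a, B)\le r$. Symmetrically, each $b\in B$ is a point of $X$, so $\dist(b, A)\le d_H(A, X)\le r$, and therefore $\sup_{b\in B}\dist(b, A)\le r$. Combining the two bounds through the max-formula in Definition~\ref{def: Hausdorff} yields $d_H(A, B)\le r$, which is the claim.

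There is no real obstacle in this argument; it is essentially the remark that $A$ and $B$ both lie inside the common space $X$. The only point needing care is to work with suprema of point-to-set distances rather than with the open neighbourhoods $U_r$. With the sup formulation, non-attained infima and boundary cases are harmless, and no $\varepsilon$-slack is required.
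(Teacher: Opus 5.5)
Your proposal is correct and matches the paper's proof: both reduce $d_H(A,X)$ and $d_H(B,X)$ to $\sup_{x\in X}\dist(x,A)$ and $\sup_{x\in X}\dist(x,B)$. Both then bound $\sup_{a\in A}\dist(a,B)$ and $\sup_{b\in B}\dist(b,A)$ by these suprema, using only that $A,B\ss X$.
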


\begin{proof}
By Definition~\ref{def: Hausdorff}, we have
\begin{align*}
    &d_H(A, B) = \max\big\{\sup_{a\in A}\dist(a, B), \sup_{b\in B}\dist(b,A)\big\},\\
    &d_H(A, X) = \max\big\{\sup_{a\in A}\dist(a, X), \sup_{x\in X}\dist(x, A)\big\} = \sup_{x\in X}\dist(x, A),\\
    &d_H(B, X) = \max\big\{\sup_{b\in B}\dist(b,X), \sup_{x\in X}\dist(x, B)\big\} = \sup_{x\in X}\dist(x, B).
\end{align*}
Since $\sup_{a\in A}\dist(a, B)\le \sup_{x\in X}\dist(x, B)$ and $\sup_{b\in B}\dist(b, A)\le \sup_{x\in X}\dist(x, A)$, we obtain the desired inequality.
\end{proof}

\begin{example}
Consider the spaces $\lambda_n\Z$ for some sequence $\{\lambda_n\}_{n\in\N}$, tending to~$1$ with $n\rightarrow\infty$. Then spaces $\lambda_N\Z$ do not converge to $\Z$ with respect to the Hausdorff distance. 

Indeed, otherwise it would imply that $\lambda_n\Z\rightarrow_{GH}\lambda\Z$ which contradicts Theorem~\ref{thm: homothety-fails}.
\end{example}

\begin{proposition}
For a finite-dimansional vector space $(V, \|\cdot\|)$, the space $\nets_H(V)$ is contractible. 
\end{proposition}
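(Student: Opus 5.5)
The plan is to contract every net onto the whole space $V$ by fattening it. Here an $\varepsilon$-net is a subset $A\ss V$ with $d_H(A,V)\le\varepsilon$. By Proposition~\ref{thm: UltraNetsHausdorff}, any two such nets are at Hausdorff distance at most $\varepsilon$, so $\nets_H(V)$ is a genuine (finite-valued) pseudometric space. If one prefers to work with closed sets only, pass to closures, since $d_H(A,\bar A)=0$. The point $V$ itself lies in $\nets_H(V)$, and the contraction will end there.

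For $A\in\nets_H(V)$ and $t\in[0,1]$ define
$$\Phi(A,t)=A\cup\bigl\{x\in V\colon \dist(x,A)\le 2t\varepsilon\bigr\},$$
i.e.\ $A$ together with its closed $2t\varepsilon$-neighbourhood. We use $2\varepsilon$ rather than $\varepsilon$ so as not to worry whether the infimum in $\dist(x,A)\le\varepsilon$ is attained. The required properties are:
\begin{itemize}
\item Since $\Phi(A,t)\supseteq A$, every point of $V$ is within $\varepsilon$ of $\Phi(A,t)$, so $\Phi(A,t)\in\nets_H(V)$.
\item $\Phi(A,0)=A$.
\item $\Phi(A,1)=V$, because every $x\in V$ has $\dist(x,A)\le\varepsilon<2\varepsilon$.
\end{itemize}
Hence $\Phi$ is a homotopy from the identity to the constant map onto $V$, once continuity is established.

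Continuity of $\Phi$ on $\nets_H(V)\times[0,1]$ follows from the estimate
$$d_H\bigl(\Phi(A,s),\Phi(B,t)\bigr)\le d_H(A,B)+2\varepsilon|s-t|,$$
which I split by the triangle inequality into two estimates.

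\emph{Step 1 (same radius, different sets): $d_H(N_r(A),N_r(B))\le d_H(A,B)$, where $N_r$ denotes the closed $r$-neighbourhood.} If $\dist(x,A)\le r$, choose $a\in A$ with $|xa|\le r+\delta$ and $b\in B$ with $|ab|\le d_H(A,B)+\delta$. Then the point $x+(b-a)$ lies within $r+\delta$ of $b$, hence is essentially in $N_r(B)$ up to $\delta$, and it is at distance $\|b-a\|$ from $x$. Letting $\delta\to0$ gives the bound. This uses translation invariance of the norm.

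\emph{Step 2 (same set, different radii): $d_H(N_s(A),N_t(A))\le|s-t|$.} Suppose $s<t$ and $x\in N_t(A)$ with a near-nearest point $a$. Moving along the segment $[a,x]$ by $t-s$ lands in $N_s(A)$, because normed spaces are geodesic.

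The main obstacle is exactly these two geometric estimates. They rely on $V$ being a normed (geodesic, translation-invariant) space, and handling the non-attained infima carefully is the only delicate point. Finite dimensionality is used only to make closed neighbourhoods and distances behave well, e.g.\ closed bounded sets are compact so that nearest points exist. With the estimate in hand, $\Phi$ is continuous, and $\nets_H(V)$ is contractible.
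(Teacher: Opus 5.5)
Your proof is correct and follows essentially the paper's route. You fatten each net by closed neighbourhoods of growing radius and get continuity from the translation estimate $d_H(N_r(A),N_r(B))\le d_H(A,B)$ together with the radius estimate $d_H(N_s(A),N_t(A))\le|s-t|$. Combining these into a single Lipschitz bound, as you do, gives joint continuity more cleanly than the paper's variable-by-variable check, and your $\delta$-handling of non-attained infima is more careful than the paper's.

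There are two small differences. First, your linear radius $2t\varepsilon$ needs $\varepsilon$ to be fixed. The paper's radius $\lambda/(1-\lambda)\to\infty$, with $\Phi(X,1)=V$, also works if nets with no common bound on $d_H(X,V)$ are allowed; continuity at $\lambda=1$ then uses $d_H(X_n,V)\le d_H(X_n,X)+d_H(X,V)$. Second, with $N_r(A)=\{x\colon\dist(x,A)\le r\}$ you get $\Phi(A,0)=\bar A$ rather than $A$. This is harmless since $d_H(A,\bar A)=0$, but you should say so; the paper avoids the issue by using unions of closed balls.
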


\begin{proof}
Define $\Phi\: \nets(\R)\times [0, 1]\to \nets(\R)$ as follows: $$\Phi(X, \lambda) = \begin{cases}B_{\lambda/(1-\lambda)}(X) & \lambda\in[0, 1),\\ V &\lambda = 1.\end{cases}$$ 
It suffices to check the continuity of~$\Phi$. Denote $f(x) = \frac{x}{1-x}$.

1) Take arbitrary $\{\lambda_n\}_{n\in\N}$ such that $\displaystyle \lim_{n\to\infty}\lambda_n = \lambda$, where $\lambda\in[0, 1)$. Then $\displaystyle \lim_{n\to\infty}f(\lambda_n) = f(\lambda)$. Note that $d_H\bigl(B_{f(\lambda_n)}(X), B_{f(\lambda)}(X)\bigr)\le |\lambda_n-\lambda|.$ It follows that $\displaystyle \lim_{n\to\infty} B_{f(\lambda_n)}(X) = B_{f(\lambda)}(X)$.

Since $X$ is an $\varepsilon$-net in $V$, there exists $t$ such that for all $t' > t$, we have $B_{t'}(X) = V$. Hence, if $\displaystyle\lim_{n\to\infty}\lambda_n = 1$, then $\displaystyle \lim_{n\to\infty} B_{f(\lambda_n)}(X) = V$.

2) Take arbitrary $\{X_n\}_{n\in\N}$ such that $\displaystyle \lim_{n\to\infty}X_n = X$. Note that $d_H\bigl(B_{f(\lambda)}(X_n), B_{f(\lambda)}(X)\bigr)\le d_H\bigl(X_n, X)$. Indeed, if $p'\in B_{f(\lambda)}(X_n)$, there exist $p\in X_n$ and $q\in X$ such that $\|p-p'\|\le f(\lambda)$, $\|p-q\|\le d_H(X_n, X)$. Thus, $\|p' - (q + p' - p)\| = \|p-q\|\le d_H(X_n, X)$, and $q + p' - p\in B_{f(\lambda)}(X)$, since $\|p-p'\|\le f(\lambda)$. Therefore, since $\displaystyle \lim_{n\to\infty} d_H(X_n, X) = 0$, we obtain $\displaystyle \lim_{n\to\infty} d_H\bigl(B_{f(\lambda)}(X_n), B_{f(\lambda)}(X)\bigr) = 0$, which finishes the proof.
\end{proof}

\subsection{Gromov--Hausdorff distance}

\begin{theorem}\label{thm: UltraNetsGromovHausdorff}
For arbitrary $A,B\ss \R$, we have $d_{GH}(A, B)\le \max\big\{d_{GH}(A, \R), d_{GH}(B, \R)\big\}$.
\end{theorem}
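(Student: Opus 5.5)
The plan is to reduce the statement to Proposition~\ref{thm: UltraNetsHausdorff} by means of Theorem~\ref{thm:HausdEqGromovHausdR}. Regard $A$ and $B$ as subsets of the ambient line $\R$, so that $(A, B, \R)$ is a realization of the pair $(A,B)$. By the definition of the Gromov--Hausdorff distance as an infimum over realizations, we get
$$d_{GH}(A,B)\le d_H(A,B).$$
Proposition~\ref{thm: UltraNetsHausdorff}, applied with $X=\R$, then gives
$$d_H(A,B)\le \max\bigl\{d_H(A,\R),\,d_H(B,\R)\bigr\}.$$
Finally, Theorem~\ref{thm:HausdEqGromovHausdR} states that $d_H(A,\R)=d_{GH}(A,\R)$ and $d_H(B,\R)=d_{GH}(B,\R)$ for arbitrary subsets of the line. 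Substituting these equalities and chaining the three relations yields the required inequality.

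Two small points need a check. First, $A$ and $B$ should be nonempty, as in Definition~\ref{def: Hausdorff}, so that the Hausdorff distances are defined. If one of them is unbounded in an unfavorable way, some quantities may equal $+\infty$, but every step is an inequality in $[0,+\infty]$, so the argument goes through unchanged. Second, we must be sure that a single ambient realization inside $\R$ bounds $d_{GH}$ from above. This is immediate from the definition, since the infimum is taken over all realizations, including the one formed by $A$, $B$ and $\R$ itself.

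The only nontrivial input is Theorem~\ref{thm:HausdEqGromovHausdR}, which we are allowed to assume. It supplies the inequality $d_H(A,\R)\le d_{GH}(A,\R)$, the direction that is not automatic, and it is the one place where the geometry of $\R$ is used. Without it, the Gromov--Hausdorff distance to $\R$ could in principle be smaller than the Hausdorff one, via a realization that is not the inclusion. So no real obstacle remains beyond verifying that the hypotheses of that theorem, namely arbitrary subsets of $\R$, match those of the statement; they do.
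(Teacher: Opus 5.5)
Your proposal is correct and is essentially the paper's proof: the same chain $d_{GH}(A,B)\le d_H(A,B)\le \max\{d_H(A,\R), d_H(B,\R)\} = \max\{d_{GH}(A,\R), d_{GH}(B,\R)\}$, using Proposition~\ref{thm: UltraNetsHausdorff} and Theorem~\ref{thm:HausdEqGromovHausdR}. You also rightly point out that the only nontrivial input is the direction $d_H(A,\R)\le d_{GH}(A,\R)$ supplied by that theorem.
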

\begin{proof}
Applying theorems~\ref{thm: UltraNetsHausdorff} and~\ref{thm:HausdEqGromovHausdR}, we obtain
$$d_{GH}(A, B)\le d_H(A, B)\le \max\big\{d_H(A, \R), d_H(B, \R)\big\} = \max\big\{d_{GH}(A, \R), d_{GH}(B, \R)\big\}.$$
\end{proof}

\begin{lemma}\label{lemma: transfer_separated}
Suppose $R\in\mathcal{R}(A, B)$, for some subsets $A,B\R$, $\dis R = c$, and $A$ is $t$-separated, for some $t > 2c$. For each $p\in A$, we choose $p'\in R(p)$ arbitrarily. Then, for every three distinct points $p, q, r\in \R$, if $p$ lies between $q$ and~$r$, then $p'$ lies between $q'$ and~$r'$.
\end{lemma}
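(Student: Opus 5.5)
The statement presumably means $p,q,r\in A$. The aim is to show that the correspondence $R$ preserves the betweenness relation on $A$. The only ingredients are two facts. Distances in $\R$ are additive along a line, and $R$ distorts every distance by at most $c$, while distinct points of $A$ are at least $t>2c$ apart.

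Fix distinct $p,q,r\in A$ with $p$ between $q$ and $r$, so that
\[
|qr|=|qp|+|pr|,\qquad |qp|\ge t,\qquad |pr|\ge t.
\]
The first observation is that $p',q',r'$ are pairwise distinct. Indeed, $|p'q'|\ge |pq|-c\ge t-c>c>0$, and the same holds for the other pairs. So exactly one of the three points $p',q',r'$ lies between the other two, and it suffices to rule out the two wrong configurations.

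Suppose, say, that $q'$ lies between $p'$ and $r'$. Then $|p'r'|=|p'q'|+|q'r'|$. By the distortion bound, $|p'r'|\le |pr|+c$, while
\[
|p'q'|+|q'r'|\ge |pq|+|qr|-2c=|pq|+|pq|+|pr|-2c=2|pq|+|pr|-2c.
\]
Combining these gives $|pr|+c\ge 2|pq|+|pr|-2c$, that is, $|pq|\le \tfrac32 c$. This contradicts $|pq|\ge t>2c$.

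The case where $r'$ lies between $p'$ and $q'$ is symmetric, with $q$ and $r$ interchanged. Hence $p'$ lies between $q'$ and $r'$.

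The only delicate point is the bookkeeping of the error terms. The wrong configuration forces the bound $2|pq|\le 3c$. This is actually stronger than the hypothesis $t>2c$ requires, since $t>\tfrac32c$ would already suffice. The hypothesis $t>c$ is also what guarantees that $p',q',r'$ are distinct, so that the trichotomy argument applies at all.
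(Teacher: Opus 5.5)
Your proof is correct and is essentially the paper's argument: assume a wrong configuration and play additivity of distances on the line against the distortion bounds. The paper handles both wrong configurations at once through $|q'r'|=\bigl||p'q'|-|p'r'|\bigr|\le\max\{|pq|,|pr|\}+c$, which gives $\min\{|pq|,|pr|\}\le 2c$ without a distinctness check, whereas your case split keeps the subtracted term and yields the sharper $\tfrac32 c$; the only nit is that $t-c>c>0$ should read $t-c>c\ge 0$ to cover $c=0$.
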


\begin{proof}
Suppose the desired statement fails for $p,q,r\in \R$. Then, on the one hand, 
\begin{align*}
|q'r'|\ge |qr| - c = |pq| + |pr| - c.
\end{align*}
On the other hand, if $p'$ does not lie between $q'$ and $r'$, then
\begin{align*}
|q'r'| = \bigl||p'q'| - |p'r'|\bigr|\le\max\bigl\{|p'q'| , |p'r'|\bigr\}\le \max\{|pq|, |pr|\} + c \le .
\end{align*}
Thus, we obtain $$2c\ge |pq| + |qr| - \max\bigl\{|pq|, |qr|\bigr\}\ge t, $$ that is a contradiction.
\end{proof}

\begin{theorem}
The space $\nets_{GH}(\R)$ is contractible.
\end{theorem}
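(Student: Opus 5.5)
The plan is to reuse the homotopy from the Hausdorff case. Put $f(\lambda)=\frac{\lambda}{1-\lambda}$ and define $\Phi\colon \nets_{GH}(\R)\times[0,1]\to\nets_{GH}(\R)$ by $\Phi(X,\lambda)=B_{f(\lambda)}(X)$ for $\lambda<1$ and $\Phi(X,1)=\R$. Then $\Phi(\cdot,0)=\mathrm{id}$ and $\Phi(\cdot,1)$ is constant, so it remains to show that $\Phi$ is well defined on GH-classes and continuous.

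\textbf{Continuity in $\lambda$.} For fixed $X$ this follows from the Hausdorff case, since $d_{GH}\le d_H$. At $\lambda=1$ I would use Theorem~\ref{thm:HausdEqGromovHausdR}: it gives $d_{GH}(B_t(X),\R)=d_H(B_t(X),\R)$, and this vanishes once $t$ exceeds the covering radius of $X$. Because that radius can vary with $X$, I will derive joint continuity at $\lambda=1$ from the estimate in the next step, which controls the covering radius of nearby nets.

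\textbf{Key step: a GH-small correspondence between nets in $\R$ comes from a rigid motion.} Let $X,Y\ss\R$ be nets, $d_{GH}(X,Y)<\delta$, and $R\in\mathcal R(X,Y)$ with $c=\dis R<2\delta$.
\begin{itemize}
\item Pick a maximal $3c$-separated subset $S\ss X$. Then $S$ is a $3c$-net of $X$, and it is bi-infinite, since $X$ is an $\varepsilon$-net in $\R$.
\item Choose $s'\in R(s)$ for each $s\in S$. By Lemma~\ref{lemma: transfer_separated}, the map $s\mapsto s'$ preserves betweenness, so it is monotone.
\item Composing with the reflection $x\mapsto -x$ if necessary, assume it is increasing. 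For $s<r$ in $S$ we then have $\bigl|(r'-s')-(r-s)\bigr|\le c$, so all values $s'-s$ lie in an interval of length $c$.
\item Hence there is an isometry $g$ of $\R$ (a translation, possibly composed with the reflection) with $|g(s)-s'|\le c$ for all $s\in S$.
\item Every $x\in X$ is within $3c$ of some $s\in S$, and every $y\in Y$ corresponds to some $x\in X$, so $|y-s'|\le 3c+c$. This gives $d_H(g(X),Y)\le Kc$ for an absolute constant $K$.
\end{itemize}

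\textbf{Consequences.} Since $B_t(g(X))=g(B_t(X))$ is isometric to $B_t(X)$, the Hausdorff estimate from the previous proposition yields
\[
d_{GH}\bigl(B_t(X),B_t(Y)\bigr)\le d_H\bigl(B_t(g(X)),B_t(Y)\bigr)\le d_H(g(X),Y)\le 2K\,d_{GH}(X,Y)
\]
after taking the infimum over $R$. Three things follow from this bound.
\begin{itemize}
\item $\Phi$ is well defined on GH-classes, by the case $d_{GH}(X,Y)=0$.
\item $\Phi$ is continuous in $X$, uniformly in $\lambda$.
\item Combined with continuity in $\lambda$, this gives joint continuity.
\end{itemize}
At $\lambda=1$, the same estimate shows that if $X_n\to X$ in the GH sense, the covering radii of $X_n$ are eventually bounded by that of $X$ plus $K\cdot o(1)$. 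So $B_{f(\lambda_n)}(X_n)=\R$ for large $n$ whenever $\lambda_n\to1$.

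The main obstacle is the step turning a small-distortion correspondence into an almost rigid motion. Two points need care there: making sure Lemma~\ref{lemma: transfer_separated} applies, which requires separation strictly greater than $2c$ and distinct images, and handling the constant $K$ and the case $c=0$. When $c=0$ I would apply the argument with $c$ replaced by an arbitrary small positive number, taking any separated subnet.
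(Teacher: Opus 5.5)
Your argument is correct, and it takes a genuinely different route from the paper's.

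\textbf{The paper's route.} The paper keeps a small-distortion correspondence $R_n\in\mathcal R(X,X_n)$ and extends it ball by ball, pairing $a\in B_{f(\lambda)}(x)$ with $x'+a-x$. To control the distortion of this extension it needs Lemma~\ref{lemma: order_change}: order reversals of $R_n$ occur only at scale $2\dis R_n$. That lemma rests on Lemma~\ref{lemma: transfer_separated} applied to a fixed $100$-separated subsequence. A case analysis then gives $\dis R_n'\le 5\dis R_n$.

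\textbf{Your route.} You apply Lemma~\ref{lemma: transfer_separated} to a maximal $3c$-separated subset $S$. This is legitimate: the lemma's proof uses only the distortion bound, so it applies to the restricted relation $R\cap(S\times Y)$, and $3c>2c$ forces distinct images. From this you show the whole correspondence lies within $Kc$ of a single isometry $g$ of $\R$; one can take $K=5$. The Gromov--Hausdorff problem then reduces to the elementary estimate $d_H(B_t(A),B_t(B))\le d_H(A,B)$, with no order-reversal lemma or case analysis.

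\textbf{What your route buys.}
\begin{itemize}
\item You get a Lipschitz bound $d_{GH}(\Phi(X,\lambda),\Phi(Y,\lambda))\le 2K\,d_{GH}(X,Y)$, uniform in $\lambda$.
\item This gives genuine joint continuity of $\Phi$, including at points $(X,1)$ approached with $X_n$ varying.
\item It gives well-definedness on GH-classes.
\item It yields the stronger structural fact that GH-close nets in $\R$ are Hausdorff-close up to an isometry of $\R$.
\end{itemize}
By contrast, the paper checks continuity only in each variable separately and asserts that this suffices, which is false in general. It treats $\lambda\to1$ only for fixed $X$. Its extended relation is shown to cover $B_{f(\lambda)}(X)$ but not $B_{f(\lambda)}(X_n)$. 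The paper's constant $5$ is also independent of $\lambda$, so its proof could be repaired along the same lines, but this is not done there. The paper's approach does stay entirely within correspondences and never needs a global rigid motion.

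\textbf{A simplification.} For the covering-radius step at $\lambda=1$ you do not need the rigidity statement. By Theorem~\ref{thm:HausdEqGromovHausdR} and the triangle inequality, $d_H(X_n,\R)=d_{GH}(X_n,\R)\le d_{GH}(X_n,X)+d_H(X,\R)$.
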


\begin{proof}
Define $\Phi\: \nets(\R)\times [0, 1]\to \nets(\R)$ as follows: $$\Phi(X, \lambda) = \begin{cases}B_{\lambda/(1-\lambda)}(X) & \lambda\in[0, 1),\\ \R &\lambda = 1.\end{cases}$$ 

\begin{lemma}\label{lemma:contraction}
A mapping $\Phi$ is continuous. 
\end{lemma}

\begin{proof}
It suffices to show that $\Phi$ is continuous with respect to each of two variables $X$ and $\lambda$. Once again, denot $f(x) = \frac{x}{1-x}$.

1) Take arbitrary $\{\lambda_n\}_{n\in\N}$ such that $\displaystyle \lim_{n\to\infty}\lambda_n = \lambda$, where $\lambda\in[0, 1)$. Then $\displaystyle \lim_{n\to\infty}f(\lambda_n) = f(\lambda)$. Consider a natural correspondence $R$ between $B_{f(\lambda_n)}(X)$ and $B_{f(\lambda)}(X)$ which is the union of natural correspondences between segments $[x- f(\lambda_n), x+f(\lambda_n)]$ and $[x-f(\lambda), x + f(\lambda)]$. Note that 
\begin{align*}
\dis R\le 2|f(\lambda_n) - f(\lambda)| = 2\Big|\frac{\lambda_n - \lambda}{(1-\lambda_n)(1-\lambda)}\Big|. 
\end{align*}
 
Thus, $$\lim_{n\to\infty}d_{GH}\Bigl(\Phi(X,\lambda_n), \Phi(X, \lambda)\Bigr) = 0.$$

It remains to show that, for a sequence $\lambda_n$ tending to~$1$, we have $\displaystyle\lim_{n\to\infty} d_{GH}(\Phi(\lambda_n, X), \R) = 0$. Since $X$ is an $\varepsilon$-net in $\R$, we have $d_H(X, \R)<\infty$. Thus, there exists such $t$ that, for any $t' > t$, we have $B_{t'}(X) = \R$, and, hence, the desired statement is true.

2) Take arbitrary $\{X_n\}_{n\in\N}$ such that $\displaystyle \lim_{n\to\infty}X_n = X$. Let us show that $\displaystyle \lim_{n\to\infty}\Phi(X_n, \lambda) = \Phi(X,\lambda)$.

Choose arbitrary $R_n\in\mathcal{R}(X, X_n)$. By passing to a subsequence, we may assume that $\dis R_n \le \frac{2}{n+100}\lambda$. In particular, $d_{GH}(X_n, X)\le \frac{1}{n}$.

Fix some $100$-separated infinite sequence of points $X' = \{p_n\}_{n\in\Z}\ss X$ such that $p_n < p_m \Longleftrightarrow n < m$. Choose $p_k'\in R_n(p_k)$. By Lemma~\ref{lemma: transfer_separated}, the order of points $p_k'$ on the real line is the same (or inverted) as the order of points $p_k$ on the real line. In the remaining proof we assume that these orders coincide with the usual order ($<$) on the real line.

\begin{lemma}\label{lemma: order_change} Suppose $(a, a')\in R_n$, $(b, b')\in R_n$ satisfy inequalities $a < b$, $a' > b'$. Then $b-a \le 2\dis R_n$. 
\end{lemma}

\begin{proof}
Choose $p_k$ such that $p_k > b+100$. Then, by definition of distortion, $\big||p_na|-|p_n'a'|\big|\le \dis R_n$, $\big||p_nb|-|p_n'b'|\big|\le \dis R_n$. Hence, $$0 > b' - a' = |p'-b'| - |p'-a'|\ge |p - b| - |p-a| - 2\dis R_n = b-a - 2\dis R_n \Longleftrightarrow b-a < 2\dis R_n.$$
\end{proof}

Now we extend $R_n$ to a correspondence $R_n'$ between $B_{f(\lambda)}(X)$ and $B_{f(\lambda)}(X_n)$ such that $\dis R_n'\le 5\dis R_n$. 

Take arbitrary $a\in B_{f(\lambda)}(x)$ for some $x\in X$. Unless $a\in\pi_X(R_n)$, we take arbitrary $x'\in R_n(x)$ and add $(a, x' + a - x)$ to $R_n$. 

Let us check the stated inequality. 

We consider three cases:

1) $(a, x' + a-x)$ and $(b, y'+b -y)$, where $a\in B_{f(\lambda)}(x)$, $b\in B_{f(\lambda)}(y)$, $(x, x'), (y, y')\in R_n$.

Here we consider two subcases. 

Wlog, we assume that $x \le y$. 

\textbf{Case 1.1}. $x' \le y'$. Then 
\begin{multline*}
\big||a-b| - |x'+a-x - y'-b + y|\big| = \big||x + a-x - y - b + y| - |x'+a-x - y'-b + y|\big| \le\\\le \big||x-y| - |x'-y'|\big|\le \dis R_n.
\end{multline*}

\textbf{Case 1.2}. $x' \ge y'$. By Lemma~\ref{lemma: order_change}, we know that $y-x<2\dis R_n < \frac{\lambda}{4}$. Thus,
\begin{align*}
\big||a-b| - |x'+a-x - y'-b + y|\big| \le \big||a-b| - |a-b|\big| + |x-y|+|x'-y'|\le 2|x-y| + \dis R_n\le  5\dis R_n.
\end{align*}

2) $(a, x' + a-x)$ and $(r, r')$, where $a\in B_{f(\lambda)}(x)$, $(x, x'), (r, r')\in R_n$.

This case can be considered similarly to the first one.

3) $(r, r')$ and $(s, s')$ where $(r, r'), (s, s')\in R_n$. 

In this case $\big||r-s|-|r'-s'|\bigr|\le \dis R_n$, by definition of distortion.
\end{proof}

From Lemma~\ref{lemma:contraction}, it follows that $\Phi$ is the desired retraction of $\nets_{GH}(\R)$ onto $\R$.
\end{proof}


\end{document}